\begin{document}

\title{An analytic approach to a weak non-Abelian Kneser-type theorem}

\author{\tsname}
\address{\tsaddress}
\email{\tsemail}

\begin{abstract}
We prove the following result due to Hamidoune using an analytic approach.  Suppose that $A$ is a subset of a finite group $G$ with $|AA^{-1}| \leq (2-\varepsilon)|A|$.  Then there is a subgroup $H$ of $G$ and a set $X$ of size $O_\varepsilon(1)$ such that $A \subset XH$.
\end{abstract}

\maketitle

\section{Introduction}

In his blog (see also \cite{tao::8}) Tao asked for a non-Abelian version of Kneser's theorem and made a number of observations related to this as well as giving a conjectural form. This question was comprehensively answered by Hamidoune in \cite{ham::} using his isoperimetric method, but in these notes we shall describe a different, more analytic, approach.  As it happens this is not a particularly efficient idea and in that sense these notes may be more of a curiosity than an essential contribution.

We remark also that Tao has written a second later blog entry comparing the two approaches (Hamidoune's and the method here) and added a third qualitative explanation of the material below; it is certainly recommended if the reader is interested in this problem.

Suppose that $G$ is a (possibly non-Abelian) group and $A,B \subset G$.  We define the product set of $A$ and $B$ to be
\begin{equation*}
AB :=\{ab: a \in A, b \in B\}.
\end{equation*}
A coset of a subgroup in $G$ may be characterised as a non-empty set $H \subset G$ such that $|HH^{-1}| = |H|$.  Our interest lies in what happens when we relax the condition to consider non-empty sets $A$ such that $|AA^{-1}| \leq K|A|$, where $K> 1$.  When $G$ is Abelian sets of this form were studied by Fre{\u\i}man in his celebrated structure theory of set addition \cite{fre::,fre::3}, and recently there has been considerable interest in extending this work to the non-Abelian setting.

As indicated, if $H$ is a coset of a subgroup then $|HH^{-1}| =|H| \leq K|H|$ for any $K>1$.  On other other hand if $H$ is such and $A \subset H$ has $|A| \geq |H|/K$ then $|AA^{-1}| \leq |H| \leq K|A|$ since $AA^{-1} \subset HH^{-1}$ and $|HH^{-1}|=|H|$.  It turns out that for $K$ sufficiently small this is the \emph{only} way of constructing such sets $A$.  In particular we have the following result of Fre{\u\i}man \cite{fre::3}.
\begin{proposition}\label{prop.fre}
Suppose that $G$ is a group and $A \subset G$ has $|AA^{-1}| \leq K|A|$ for some $K<1.5$.  Then $A$ is contained in a (left) coset of a subgroup $H$ with  $|H| \leq K|A|$.
\end{proposition}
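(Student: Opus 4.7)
The plan is to show that the set $S := AA^{-1}$ is itself a subgroup of $G$. After the harmless normalization of replacing $A$ by $Aa^{-1}$ for some $a \in A$ --- which preserves $AA^{-1}$ and puts $1 \in A$, so that $A \subset S$ --- this yields the proposition with $H = S$ and $|H| \leq K|A|$. Since $S$ is symmetric and contains the identity, the task reduces to showing closure: $SS \subset S$.

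My main tool is the representation function $r(s) := |A \cap sA|$, which counts pairs $(a,b) \in A \times A$ with $ab^{-1}=s$. It is supported on $S$, has total mass $|A|^2$, and satisfies $r(s^{-1})=r(s)$. The key inequality, obtained from the inclusion $s(s^{-1}A \cap A \cap tA) \subset A \cap stA$ together with a two-step inclusion-exclusion inside $A$, is the sub-additivity
\[
r(st) \geq r(s) + r(t) - |A| \qquad \text{for all } s,t \in G.
\]
Consequently, if I can establish $r(s) > |A|/2$ for every $s \in S$, then $r(st) > 0$ whenever $s,t \in S$, which forces $SS \subset S$.

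To obtain this pointwise lower bound on $r$, I would consider the family $\{A^{-1}a : a \in A\}$: these are $|A|$ subsets of $A^{-1}A$, each of size $|A|$, and a direct count identifies $|A^{-1}a \cap A^{-1}b|$ with $r(ab^{-1})$. Pairwise inclusion-exclusion inside $A^{-1}A$ then gives $r(s) \geq 2|A| - |A^{-1}A|$ for every $s \in S$, reducing the task to showing $|A^{-1}A| < \tfrac{3}{2}|A|$.

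The main obstacle is controlling $|A^{-1}A|$, since in a non-abelian group this need not equal $|AA^{-1}|$ and is not directly bounded by the hypothesis. Running the mirror intersection argument with the family $\{aA^{-1}\} \subset S$ yields $|A^{-1}A| \leq |A|/(2-K)$, which comfortably suffices when $K < \tfrac{4}{3}$ but degenerates as $K$ approaches $\tfrac{3}{2}$. Closing the remaining range $\tfrac{4}{3} \leq K < \tfrac{3}{2}$ is the delicate core of Freiman's argument; one expects to bootstrap the estimate by iteration, or by exploiting the rigidity forced on $A$ as its doubling approaches the threshold, and it is here rather than in any of the preceding steps that the real combinatorial work lies.
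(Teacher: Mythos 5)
Your closure mechanism (the subadditivity $r(st) \ge r(s) + r(t) - |A|$, equivalently the pigeonhole on representations) is exactly the right engine, and your ``mirror'' computation is in fact the key inequality of the paper's proof. The genuine gap is the one you flag at the end, and it does not close by bootstrapping: lower-bounding $r(s) = |A \cap sA|$ for $s \in AA^{-1}$ forces you through $|A^{-1}A|$, which the hypothesis does not control, and the free bound $|A^{-1}A| \le |A|/(2-K)$ really does degenerate past $K = 4/3$. The gap is a symptom of having chosen the wrong product set as the candidate subgroup. The hypothesis $|AA^{-1}| \le K|A|$ is tailored to lower-bounding the representation function on $A^{-1}A$, not on $AA^{-1}$: for $x = a^{-1}b \in A^{-1}A$, the number of representations of $x$ as $c^{-1}d$ with $c,d \in A$ equals $|aA^{-1} \cap bA^{-1}|$, and both $aA^{-1}$ and $bA^{-1}$ are subsets of $AA^{-1}$ of size $|A|$, so inclusion-exclusion inside $AA^{-1}$ gives at least $(2-K)|A| > |A|/2$ for \emph{every} $x \in A^{-1}A$ and every $K < 3/2$. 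This is precisely your mirror estimate, read as a pointwise lower bound rather than summed to control $|A^{-1}A|$.

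With that switch the rest of your argument goes through verbatim for all $K<3/2$: two elements $x,y \in A^{-1}A$ each have more than $|A|/2$ representations, so by pigeonhole they share a middle factor and $xy = a^{-1}b \cdot b^{-1}c \in A^{-1}A$; hence $H := A^{-1}A$ is a symmetric, multiplicatively closed, non-empty set, i.e.\ a subgroup, and $A \subseteq aH$ is a left coset as required. For the size bound (which your normalization would also need to address), note that the representation count gives $|H| \le |A|/(2-K) < 2|A|$; after translating so that $1 \in A$ we have $A \subseteq H$ with $|A| > |H|/2$, whence $AA^{-1} = H$ and $|H| = |AA^{-1}| \le K|A|$. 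A posteriori your $S = AA^{-1}$ is indeed also a subgroup (a conjugate of $A^{-1}A$), but the only clean route to that fact is through $A^{-1}A$ first; trying to prove it directly is what stalls your argument at $K = 4/3$.
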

It is instructive to see the proof of this since the definitions and tools will be useful later.  In particular the proof motivates the introduction of convolution.

Suppose that $f,g \in \ell^1(G)$.  Then we define the convolution of $f$ and $g$ to be the function
\begin{equation*}
f \ast g(x):=\sum_{yz=x}{f(y)g(z)} \text{ for all } x \in G.
\end{equation*}
The convolution is useful for two important reasons: the first, if $A, B \subset G$ are finite then
\begin{equation*}
\supp 1_A \ast 1_B = AB
\end{equation*}
so that we can analyse the product set $AB$ through the convolution $1_A \ast 1_B$.  This is often rather easier to do than analysing $1_{AB}$ directly since the convolution is (typically) smoother.  

The second reason convolution is important is that
\begin{equation*}
1_A \ast 1_B(x) = |A \cap xB^{-1}| \text{ for all } x \in G.
\end{equation*}
To prove Proposition \ref{prop.fre} we need both of these facts.  If $x \in A^{-1}A$ then $x=a^{-1}a'$ for some $a,a' \in A$ and so using the second fact we see that
\begin{eqnarray}\label{eqn.key}
1_{A^{-1}} \ast 1_{A}(x) &= & |A^{-1} \cap (a^{-1}a'A^{-1})| = |aA^{-1} \cap a'A^{-1}|\\ \nonumber & \geq & |aA^{-1}| + |a'A^{-1}| - |aA^{-1} \cup a'A^{-1}| \geq (2-K)|A|;
\end{eqnarray}
the first fact tells us that that if $1_{A^{-1}} \ast 1_A(x) \neq 0$ then $x \in A^{-1}A$ and so $1_{A^{-1}} \ast 1_A(x) \geq (2-K)|A|$.  Crucially this leads to a step in the values $1_{A^{-1}} \ast 1_A$ may take which will also be useful later.
\begin{proof}[Proof of Proposition \ref{prop.fre}] Suppose that $x,y \in A^{-1}A$.  By (\ref{eqn.key}) there are more than $|A|/2$ pairs $(a,a')\in A \times A$ such that $x=a^{-1}a'$ and more than $|A|/2$ pairs $(a'',a''') \in A\times A$ such that $y=a''^{-1}a'''$.  It follows that there must be two pairs $(a,a')$ and $(a'',a''')$ with $a' = a''$, and hence $xy = a^{-1}a'a''^{-1}a'''=a^{-1}a''' \in A^{-1}A$.  It follows that $(A^{-1}A)^2 = A^{-1}A$, but $A^{-1}A$ is also symmetric (and non-empty) and so $A^{-1}A$ is a subgroup of $G$.  On the other hand for any $a \in A$ we have $a^{-1}A \subset A^{-1}A$ and so $A \subset aA^{-1}A$ and the result is proved.
\end{proof}
The restriction $K<1.5$ in this arugment is not simply an artefact of the proof: there is a qualitatively new structure which occurs at this threshold.  Suppose that $x$ is an element of order $4$ and consider $A:=\{1_G,x\}$.  Then $|AA^{-1}| = 3$ while $|A|=2$ so that $|AA^{-1}| \leq 1.5|A|$.  On the other hand if $H$ is a coset of a subgroup containing $A$ then $AA^{-1}\subset HH^{-1}$ and so $HH^{-1}$ contains $x$, an element of order $4$.  Thus the $HH^{-1}$, which is a group, has size at least $4$ and we conclude that the smallest coset containing $A$ has size at least $4$ which is bigger than $3=1.5|A|$.

While the set $A$ above cannot be very efficiently contained in a subgroup, it can be very efficiently covered by a subgroup: the trivial subgroup.  In light of this (and following Green and Ruzsa \cite{greruz::}, but see also Tao \cite{tao::9}) we say that a set $A$ is \emph{$K$-covered (on the left) by a set $B$} if there is a set $X$ of at most $K$ elements such that $A \subset XB$.  We shall then prove the following theorem.
\begin{theorem}[Weak non-abelian Kneser]\label{thm.ham}
Suppose that $G$ is a finite group and $A \subset G$ has $|AA^{-1}| \leq (2-\varepsilon)|A|$.  Then $A$ is $O_{\varepsilon}(1)$-covered by a subgroup $H$ of size at most $O_{\varepsilon}(|A|)$.
\end{theorem}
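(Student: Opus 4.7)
The proof will rest on the step property established in (\ref{eqn.key}): writing $f := 1_{A^{-1}} \ast 1_A$ and $B := A^{-1}A$, the hypothesis $K = 2-\varepsilon$ gives the pointwise bound $f \geq \varepsilon|A| \cdot 1_B$ alongside the total-mass identity $\sum_x f(x) = |A|^2$. As a first simplification, since $|AA^{-1}|/|A|$ is strictly less than $2$, Ruzsa's covering lemma (applied to $A$ and $A^{-1}$, forcing the covering set to have at most $\lfloor 2-\varepsilon \rfloor = 1$ element) produces a single $a \in A$ with $A \subset aB$. It therefore suffices to find a subgroup $H$ of $G$ of size $O_\varepsilon(|A|)$ such that $B$ itself is covered by $O_\varepsilon(1)$ left cosets of $H$.

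The natural candidates for $H$ are the iterated products $B^n$ with $n = n(\varepsilon)$ bounded. The step property propagates: for any $x \in B^n$ factored as $x = b_1 \cdots b_n$ with $b_i \in B$, the single term $y_i = b_i$ in $f^{*n}(x) = \sum_{y_1 \cdots y_n = x} f(y_1) \cdots f(y_n)$ shows $f^{*n}(x) \geq (\varepsilon|A|)^n$; combining with $\sum_x f^{*n}(x) = |A|^{2n}$ gives the crude estimate $|B^n| \leq (|A|/\varepsilon)^n$. Supplementing this with the non-Abelian Pl\"unnecke--Ruzsa inequalities of Tao sharpens it to $|B^n| = O_{n,\varepsilon}(|A|)$ for each fixed $n$. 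If one can show $B^n = B^{n+1}$ for some $n = n(\varepsilon)$, then $H := B^n$ is symmetric, contains the identity, and is closed under multiplication, so it is a subgroup of size $O_\varepsilon(|A|)$ containing $B$, and the theorem follows with $|X| = 1$.

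The crux of the argument is thus exhibiting such stabilisation in a bounded number of steps. The bare step property only guarantees many representations of each element of $B$, and in the non-Abelian setting $B^n$ can \emph{a priori} grow well past $|A|$, so I expect this to be the main technical difficulty. To close the gap I would look for a quantitative refinement of the step property at iterated levels: study $f^{*(n+1)} = f \ast f^{*n}$ on the ``new'' region $B^{n+1} \setminus B^n$ and try to argue that the mass it receives, together with the iterated lower bound, contradicts the total-mass identity once $n$ is moderately large compared with $\varepsilon^{-1}$. A complementary strategy, in the spirit of the ``analytic approach'' advertised in the introduction, would be a non-Abelian Bogolyubov- or Croot--Sisask-type construction: define $H$ as a level set of a suitable iterated convolution or as an approximate stabiliser $\{g : |gB \cap B| \geq \alpha|B|\}$, show via the triangle-type inequality $|g_1 g_2 B \cap B| \geq |g_2 B \cap B| - |g_1 B \triangle B|$ that it is an approximate group, and then extract an honest nearby subgroup of comparable size, covering $B$ by boundedly many of its cosets. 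Either route requires combining the step property with non-Abelian product-set estimates in a non-trivial way, and this combination is where the essential work will lie.
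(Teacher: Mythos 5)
Your reduction to covering $B := A^{-1}A$ is fine (indeed $A \subset aA^{-1}A$ for any $a \in A$, so no covering lemma is needed), and the iterated step bound $f^{*n} \geq (\varepsilon|A|)^n$ on $B^n$ is correct. The genuine gap is the stabilisation step, and it is not merely the "main technical difficulty" you defer: the route via $H = B^n = \langle B \rangle$ fails outright. Take $A$ to be an arithmetic progression of length $N \leq \varepsilon^{-1}$ in $\mathbb{Z}/p\mathbb{Z}$ with $p$ enormous; then $|A-A| = 2N-1 \leq (2-\varepsilon)|A|$, but $B^n = nB$ is an arithmetic progression of length about $2nN$ which grows strictly with $n$ until it wraps around, so $B^n = B^{n+1}$ first occurs at $n \approx p/N$, which is unbounded, and $\langle B \rangle = \mathbb{Z}/p\mathbb{Z}$ has size $p \gg |A|$. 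The subgroup the theorem wants in this example is the trivial one (with $X = A$ itself, of size $O_\varepsilon(1)$); it is not generated by $A^{-1}A$, and no mass-counting against $f^{*n}$ can produce it, since nothing in your argument distinguishes this example from one where stabilisation does occur. (A secondary issue: $|B^n| = O_{n,\varepsilon}(|A|)$ does not follow from the non-Abelian Pl\"unnecke--Ruzsa inequalities under a doubling hypothesis alone, as those require control of a triple product; but it is the stabilisation claim that is fatal.)

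The missing idea, which is where the paper's proof actually lives, is that the subgroup must be found inside a much smaller almost-invariance kernel rather than as a power of $B$. The paper first applies the Croot--Sisask/Bogolyubov-type Proposition \ref{prop.cs} to produce a symmetric set $X$ with $|X| = \Omega(|A|)$ and $|X^4| = O(|X|)$ on whose powers the fourfold convolution is large, and then Proposition \ref{prop.quantcon} to extract sets $B' \subset B \subset X^4$ such that $1_{A^{-1}} \ast 1_A \ast \widetilde{\P_B} \ast \P_B$ is almost invariant under right translation by $B'$ and close in local $L^2$ to $1_{A^{-1}} \ast 1_A$. Combined with the step property (\ref{eqn.key}), this forces a level set of the smoothed convolution to be exactly invariant under $H := \langle B' \rangle$, and the bound $|H| = O_\varepsilon(|A|)$ then comes from the correlation $\langle \P_{zH}, 1_{A^{-1}} \ast 1_A \ast \widetilde{\P_B} \ast \P_B \rangle \geq 3\varepsilon|A|/4$ rather than from any product-set estimate. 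Your second, "complementary" suggestion (approximate stabiliser plus extraction of an honest nearby subgroup) is much closer in spirit to this, but the extraction step is precisely the hard content of Proposition \ref{prop.quantcon} and cannot be waved through; as written, neither of your routes closes the argument.
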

Note that this result is not a characterisation in the way that Proposition \ref{prop.fre} was.  Additionally the example of a long arithmetic progression shows that one cannot hope to remove the $\varepsilon$ entirely without expanding the class of structure one wishes to cover by.  This can be done, but is much harder than our work here; see \cite{bregretao::0} for details.

Before discussing our approach we note that this result is a corollary of the work of Hamidoune.  In \cite{ham::} he proved the following.
\begin{theorem}[{\cite[Theorem 1]{ham::}}]
Suppose that $G$ is a group and $A \subset G$ is finite.  Then there is a subgroup $H$ of $G$ such that at least one of the following holds:
\begin{enumerate}
\item $A^{-1}HA = A^{-1}A$ and $|A^{-1}A| \geq 2|HA| - |H|$;
\item $AHA^{-1}=AA^{-1}$ and $|AA^{-1}| \geq 2|AH| - |H|$.
\end{enumerate}
\end{theorem}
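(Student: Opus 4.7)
The result is considerably deeper than Proposition~\ref{prop.fre}, but the proof of that proposition suggests a template: isolate an auxiliary set whose structure under multiplication forces a subgroup to emerge. My plan is to adapt the proof of Kneser's theorem from the abelian setting via an extremal minimiser; I describe the approach targeting conclusion (2), with (1) being the symmetric case obtained by replacing $A$ by $A^{-1}$.

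The main steps are as follows. First, among non-empty $B \subset G$ consider a minimiser $B_0$ of a surplus functional such as $|AB|-|B|$, subject to whatever non-degeneracy constraint is needed to ensure the minimiser interacts meaningfully with $AA^{-1}$. Second, set $H := \{h \in G : hB_0 = B_0\}$, which is automatically a subgroup of $G$ and for which $B_0$ decomposes as a union of left $H$-cosets. Third, use the extremality of $B_0$ to prove a Kneser-type inequality of the form $|AB_0| \geq |AH| + |B_0| - |H|$; by suitably relating $B_0$ to $A^{-1}$ one should then rearrange this into $|AA^{-1}| \geq 2|AH| - |H|$. Fourth, derive $AHA^{-1} = AA^{-1}$ by arguing that any element of $AHA^{-1}$ lying outside $AA^{-1}$ would allow one to construct a strictly smaller minimiser, contradicting the first step.

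The main obstacle is the fourth step, together with the asymmetry between left and right multiplication. In the abelian case Kneser's theorem yields clean left-periodicity of the sumset, but in the non-abelian setting $H$ periodises $AA^{-1}$ only in the twisted form $AHA^{-1}=AA^{-1}$, and the question of which of conclusions (1) or (2) applies is governed by a subtle interaction between the two actions of $A$ on an extremal set. This is precisely the point that Hamidoune's isoperimetric method resolves by careful tracking of set boundaries under both multiplications, and I would not expect a naive extremal argument to recover it without substantial additional work. In practice, to reach Theorem~\ref{thm.ham} it suffices to treat Hamidoune's theorem as a black box: taking conclusion (2), the inequality $|AH| \leq ((2-\varepsilon)|A| + |H|)/2$ combined with $|A| \leq |AH|$ forces $|H| \geq \varepsilon|A|$, and combined with $|H| \leq |AH|$ forces $|H| \leq (2-\varepsilon)|A|$; hence $|AH|/|H| \leq 1/\varepsilon$ and $A$ is covered by at most $1/\varepsilon$ left cosets of $H$.
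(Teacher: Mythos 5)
You have not proved the statement; what you have written is a plan whose central steps are themselves the entire content of Hamidoune's theorem, and you candidly say so. To name the gaps concretely: (i) the minimisation in your first step is not well-posed without the ``non-degeneracy constraint'' you leave unspecified --- since $|AB|\geq|B|$ always, in a finite group $B=G$ minimises $|AB|-|B|$ outright and its stabiliser is all of $G$, for which conclusion (2) is false; at the other extreme nothing rules out the left stabiliser $H=\{h:hB_0=B_0\}$ of a minimiser being trivial, in which case conclusion (2) reads $|AA^{-1}|\geq 2|A|-1$, which fails for $A$ a nontrivial subgroup. Choosing the right constrained minimiser and showing its stabiliser is large enough is exactly where Hamidoune's machinery of $k$-fragments and $k$-atoms (two distinct atoms meet in fewer than $k$ points, so the atom through the identity is a subgroup) does its work, and none of that appears here. (ii) Your third step asserts $|AB_0|\geq|AH|+|B_0|-|H|$ ``by extremality'' and then a rearrangement ``by suitably relating $B_0$ to $A^{-1}$''; the natural candidate is $B_0=HA^{-1}$, for which the inequality becomes the desired $|AHA^{-1}|\geq 2|AH|-|H|$, but no argument is offered for either the inequality or for why the minimiser should take that form, and the abelian tools (Kneser's $e$-transform, submodularity of $B\mapsto|A+B|$) do not transfer. (iii) The periodicity $AHA^{-1}=AA^{-1}$ in your fourth step you explicitly decline to prove.

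It is worth noting that the paper does not prove this theorem either: it is imported verbatim from \cite[Theorem 1]{ham::}, and the point of the paper is to give a different, analytic proof of the weaker covering statement, Theorem \ref{thm.ham}, that bypasses Hamidoune's result entirely. Your closing black-box deduction of Theorem \ref{thm.ham} from conclusion (2) is correct and essentially matches the paper's own deduction (the paper counts the left cosets of $H$ meeting $A$ via $|AHA^{-1}|\geq R|H|$, whereas you bound $|AH|/|H|\leq\varepsilon^{-1}$ directly; both give an $O(\varepsilon^{-1})$ cover), but that is the corollary, not the theorem you were asked to prove.
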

Hamidoune's theorem makes the connection with Kneser's theorem \cite{kne::} clearer; we record it now for comparison.  (A proof may also be found in \cite[Theorem 5.5]{taovu::}, which we mention for convenience as this is the standard text book for additive combinatorics.) 
\begin{theorem}[Kneser's theorem (symmetric version)]  Suppose that $G$ is an Abelian group and $A \subset G$ is finite.  Then there is a subgroup $H$ of $G$ such that
\begin{equation*}
A-A + H= A-A \text{ and }|A-A| \geq 2|A+H| - |H|.
\end{equation*}
\end{theorem}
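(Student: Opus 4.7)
The plan is to take $H$ to be the stabiliser of $A - A$, reduce to the quotient $G/H$ where this stabiliser becomes trivial, and there prove the sumset bound $|B-B| \geq 2|B|-1$ by a Dyson $e$-transform induction. Concretely I would set
\[
H := \{h \in G : (A-A) + h = A-A\}.
\]
Since $A-A$ is finite, the set of translations preserving it is automatically a subgroup, and $(A-A)+H = A-A$ by definition, giving the first conclusion of the theorem.

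For the inequality, write $A' := A + H$, so $|A'| = |A+H|$ and $A'$ is a union of $H$-cosets. Since $H = -H$ and $H$ stabilises $A-A$, one checks $A' - A' = (A-A) + H - H = A - A$. Passing to the quotient $\overline G := G/H$, let $\overline A$ denote the image of $A'$; then $|\overline A| = |A'|/|H|$ and $|\overline A - \overline A| = |A-A|/|H|$, so the target inequality divides through by $|H|$ and becomes $|\overline A - \overline A| \geq 2|\overline A| - 1$. By the maximality of $H$, the stabiliser of $\overline A - \overline A$ in $\overline G$ is trivial.

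It therefore suffices to prove the following: if $B$ is a finite subset of an abelian group $\Gamma$ and $\mathrm{Stab}(B-B) = \{0\}$, then $|B-B| \geq 2|B|-1$. I would deduce this from the more flexible two-set statement $|B_1 + B_2| \geq |B_1| + |B_2| - 1$ under $\mathrm{Stab}(B_1+B_2)=\{0\}$, applied with $B_1 = B$ and $B_2 = -B$. The two-set version I prove by induction on $|B_1| + |B_2|$ using Dyson's transform: for $x \in \Gamma$ put $B_1^x := B_1 \cup (x+B_2)$ and $B_2^x := B_2 \cap (B_1 - x)$; one checks $|B_1^x|+|B_2^x| = |B_1|+|B_2|$ and $B_1^x + B_2^x \subset B_1 + B_2$. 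A suitable choice of $x$ either strictly reduces $\min(|B_1|,|B_2|)$ (so induction applies) or forces every difference in $B_2 - B_2$ to be a period of $B_1 + B_2$, contradicting the trivial-stabiliser hypothesis.

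The delicate step is the last one: arguing that if no Dyson transform strictly decreases the minimum cardinality, then $B_1 + B_2$ must have a non-trivial stabiliser. This is the combinatorial heart of Kneser's original argument and is where the triviality of $\mathrm{Stab}(B-B)$ enters the argument in an essential way. Everything else — the choice of $H$, passage to the quotient, and the reduction to the two-set statement — is clean bookkeeping that uses only the defining property of the stabiliser and the fact that $H$ is a subgroup.
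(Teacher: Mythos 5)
First, a point of orientation: the paper does not prove this theorem at all --- it records it only for comparison with Hamidoune's result and points to \cite[Theorem 5.5]{taovu::} for a proof --- so there is no in-paper argument to measure you against, and your outline is the standard $e$-transform route found in that reference. Your reduction is correct and cleanly executed: taking $H$ to be the stabiliser of $A-A$, checking $(A+H)-(A+H)=A-A$, passing to $G/H$ where the stabiliser of the image of $A-A$ becomes trivial, and dividing the target inequality by $|H|$ to reach $|\overline{A}-\overline{A}|\geq 2|\overline{A}|-1$ are all fine.

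The gap is in the step you yourself flag as delicate, and it is worse than ``delicate'': as set up, your induction cannot close. Two concrete problems. First, you induct on $|B_1|+|B_2|$, but the Dyson transform preserves this quantity exactly, so even when $\min(|B_1|,|B_2|)$ strictly drops your induction parameter does not decrease; you would need to induct on $\min(|B_1|,|B_2|)$ or on $|B_2|$ after ordering the sets. Second, and more seriously, the statement you are inducting on carries the hypothesis $\mathrm{Stab}(B_1+B_2)=\{0\}$, but the transformed pair $(B_1^x,B_2^x)$ need not inherit it: $B_1^x+B_2^x$ is merely a subset of $B_1+B_2$ and can acquire a nontrivial stabiliser of its own, so the inductive hypothesis simply does not apply to it. The known way to run this induction is to take the full Kneser conclusion $|B_1+B_2|\geq |B_1+H'|+|B_2+H'|-|H'|$, with $H'$ the stabiliser of $B_1+B_2$, as the inductive statement; the case in which the transform produces a sumset with nontrivial stabiliser then requires a careful count of how $B_1$ and $B_2$ meet the $H'$-cosets, and that case analysis is precisely the combinatorial heart you have deferred. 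Until it is written out, what you have is an accurate map of the standard proof, not a proof.
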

Theorem \ref{thm.ham} follows from Hamidoune's result.
\begin{proof}[Proof of Theorem \ref{thm.ham}]
We apply Hamidoune's theorem to get a subgroup $H$ such that 
\begin{equation*}
AHA^{-1}=AA^{-1} \textrm{ and } |AA^{-1}| \geq 2|AH| - |H|.
\end{equation*}
Since $|AH| \geq |A|$ and $|AA^{-1}| \leq (2-\varepsilon)|A|$ we conclude from the inequality that $|H| \geq \varepsilon |A|$.  On the other hand if $A$ has non-empty intersection with $R$ left cosets of $H$, then $|AHA^{-1}| \geq R|H|$, whence $R \leq2\varepsilon^{-1}-1$.  It follows that $A$ is $2\varepsilon^{-1}$-covered by $H$ and the result is proved.
\end{proof}
Note that the dependence on $\varepsilon$ here is sharp up to the multiplicative constant as can be seen by considering an arithmetic progression of length about $\varepsilon^{-1}$.

Our approach to Theorem \ref{thm.ham} is based around the following idea much of which was also identified as important by Tao when he recorded the original question.

Roughly, we proceed by analysing $1_{A^{-1}} \ast 1_A$ which as a convolution is pretty smooth, but then we saw in (\ref{eqn.key}) that for $K<2$ there is a jump between when $1_{A^{-1}} \ast 1_A$ is zero and when it is non-zero.  These two facts mean that $A^{-1}A$, the support of $1_{A^{-1}} \ast 1_A$ must be a `connected component' in some sense which turns out to mean that it is a small union of cosets of a subgroup.  

\section{Analytic proof of Theorem \ref{thm.ham}}

Some readers may wish to proceed assuming that $G$ is Abelian to get a sense of how the argument goes, although obviously in this setting the usual version of Kneser's theorem is well-known and immediately yields Theorem \ref{thm.ham} by the same argument we used to derive it from Hamidoune's theorem in the general case.

We shall need two main results in our work.  The first is a non-Abelian Bogolyubov-Ruzsa-type result (\emph{c.f.} \cite{bog::,ruz::9}) from the paper \cite{crosis::} of Croot and Sisask.  This provides us with a set which is an approximate group in the sense of \cite{tao::6} and which is also correlated with our set $A$.
\begin{proposition}\label{prop.cs}  Suppose that $G$ is a group, $A \subset G$ is a finite set with $|AA^{-1}| \leq K|A|$ and $k \in \N$ is a parameter.  Then there is a symmetric neighbourhood of the identity, $X$, such that $|X|=\Omega_{K,k}(|A|)$ and
\begin{equation*}
1_{A^{-1}} \ast 1_{A} \ast 1_{A^{-1}} \ast 1_A(x)\geq |A|^3/2K \text{ for all }x \in X^k.
\end{equation*}
\end{proposition}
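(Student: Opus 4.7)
The plan is to combine Cauchy--Schwarz with the Croot--Sisask almost-periodicity machinery. Set $g := 1_{A^{-1}} \ast 1_A$, so the quantity to be bounded below at each $x \in X^k$ is $g \ast g(x)$. The function $g$ is non-negative and symmetric (i.e.\ $g(x^{-1}) = g(x)$), supported on $A^{-1}A$, with $\|g\|_1 = |A|^2$ and $g(1) = |A|$. Using the symmetry to rewrite $g(y^{-1}x) = g(x^{-1}y)$,
\[
g \ast g(x) = \sum_y g(y) g(x^{-1}y) = \langle g, L_x g \rangle,
\]
where $L_x$ denotes left translation by $x$. Cauchy--Schwarz then yields the central inequality
\[
g \ast g(x) \geq \|g\|_2^2 - \|g\|_2 \cdot \|L_x g - g\|_2.
\]

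At $x = 1$ the leading term is $\|g\|_2^2 = g \ast g(1)$, and a further Cauchy--Schwarz bound gives $\|g\|_2^2 \geq \|g\|_1^2/|\supp g| = |A|^4/|A^{-1}A|$. Combined with a non-Abelian Ruzsa triangle estimate controlling $|A^{-1}A|$ in terms of $|AA^{-1}| \leq K|A|$, this shows $\|g\|_2^2$ exceeds $|A|^3/(2K)$ by a definite margin of size $\Omega_K(|A|^3)$. It therefore suffices to produce a symmetric neighbourhood $X \ni 1$ of size $\Omega_{K,k}(|A|)$ such that for every $x \in X^k$ the error term $\|g\|_2 \cdot \|L_x g - g\|_2$ sits below this margin.

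Writing $L_x g - g = (1_{xA^{-1}} - 1_{A^{-1}}) \ast 1_A$ shows that this is exactly a question of producing many $L^2$ almost-periods for the convolution $1_{A^{-1}} \ast 1_A$, which is the natural output of the non-Abelian Croot--Sisask lemma in \cite{crosis::}.  Their probabilistic sampling argument --- which approximates the convolution in $L^p$ by an average of a few translates of $1_A$ and then extracts many common translates --- produces a symmetric set $T \ni 1$ of size $\Omega_{K,p,\eta}(|A|)$ for which $\|L_t g - g\|_p \leq \eta |A|^{1+1/p}$ for all $t \in T$.  Taking $p = 2$ and applying the triangle inequality factor-by-factor (noting that left translation is an $L^2$-isometry), every $x = t_1 \cdots t_k \in T^k$ satisfies
\[
\|L_x g - g\|_2 \leq k\eta |A|^{3/2}.
\]

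Plugging this back into the Cauchy--Schwarz bound, together with the crude estimate $\|g\|_2 \leq |A|^{3/2}$ (from $\|g\|_\infty \leq |A|$ and $\|g\|_1 = |A|^2$), the error term is at most $k\eta |A|^3$; choosing $\eta$ as a small constant multiple of $1/(kK)$ drops this below the $\Omega_K(|A|^3)$ margin while preserving $|T| = \Omega_{K,k}(|A|)$, so that setting $X := T$ completes the proof.  The main obstacle is really the Croot--Sisask step itself, invoked here as a black box; once that non-Abelian $L^p$ almost-periodicity statement is imported, the remainder is routine bookkeeping to match up the parameters $p, \eta$ against the constants coming out of the two Cauchy--Schwarz steps.
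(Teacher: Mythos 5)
Your architecture --- Cauchy--Schwarz to reduce the pointwise bound on $g \ast g$ with $g = 1_{A^{-1}} \ast 1_A$ to a lower bound on $\|g\|_2^2$ plus an $L^2$ almost-periodicity input, then a factor-by-factor triangle inequality over $x = t_1 \cdots t_k$ --- is exactly the content of \cite[Theorem 4.1]{crosis::} that the paper invokes as a black box, so the route is essentially the paper's. There is, however, one genuine gap, and it is precisely the subtlety the paper goes out of its way to flag. You bound $\|g\|_2^2 \geq \|g\|_1^2/|\supp g| = |A|^4/|A^{-1}A|$ and then assert that a ``non-Abelian Ruzsa triangle estimate'' controls $|A^{-1}A|$ in terms of the hypothesis $|AA^{-1}| \leq K|A|$. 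No such estimate exists. The triangle inequality $|XZ^{-1}||Y| \leq |XY^{-1}||YZ^{-1}|$ with $X = Z = A^{-1}$ only yields $|A^{-1}A| \leq |YA|^2/|Y|$, which is circular or vacuous for every useful choice of $Y$, and in fact small $|AA^{-1}|$ does not force small $|A^{-1}A|$: take $G$ to be the affine group $t \mapsto at+b$ over the field with $p$ elements, $H$ the stabiliser of $0$, $x$ a nonzero translation, and $A = H \cup xH$. Then $AA^{-1} = H \cup Hx^{-1} \cup xH \cup xHx^{-1}$ has size at most $4|H| = 2|A|$, whereas $A^{-1}A \supset HxH$, which has size $|H|^2/|H \cap xHx^{-1}| = |H|^2 = |A|^2/4$ since distinct point stabilisers in a sharply $2$-transitive action meet trivially. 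So your claimed margin of $\Omega_K(|A|^3)$ evaporates.

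The correct repair is the one the paper attributes to Tao \cite[Lemma 4.3]{tao::6}: the identity $\|1_{A^{-1}} \ast 1_A\|_{\ell^2(G)}^2 = \|1_A \ast 1_{A^{-1}}\|_{\ell^2(G)}^2$, which follows from $\langle f \ast g, h\rangle = \langle g, \tilde{f} \ast h\rangle = \langle f, h \ast \tilde{g}\rangle$ (equivalently, from the bijection between quadruples in $A^4$ with $a^{-1}b = c^{-1}d$ and those with $ac^{-1} = bd^{-1}$). Applying your Cauchy--Schwarz step to $1_A \ast 1_{A^{-1}}$ instead, whose support $AA^{-1}$ genuinely has size at most $K|A|$ by hypothesis, gives $\|g\|_2^2 \geq |A|^3/K$ and restores the margin. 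With that substitution the remaining bookkeeping --- the bound $\|g\|_2 \leq |A|^{3/2}$, the choice $\eta$ of order $1/(kK)$, the symmetry of the set of almost-periods, and the telescoping using that left translation is an $\ell^2$-isometry --- goes through as you describe, modulo checking that the Croot--Sisask sampling argument itself runs under the one-sided hypothesis $|AA^{-1}| \leq K|A|$, which is the other reason the paper cites \cite[Theorem 4.1]{crosis::} in the specific form ``applied to the sets $A^{-1}$, $A$ and $A$'' rather than a generic almost-periodicity statement.
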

This is \cite[Theorem 4.1]{crosis::} applied to the sets $A^{-1}$, $A$ and $A$, and using the fact that 
\begin{equation}\label{eqn.sym}
\|1_{A^{-1}} \ast 1_A \|_{\ell^2(G)}^2\geq |A|^3/K
\end{equation}
if $|AA^{-1}| \leq K|A|$.  One should like to prove this by applying the Cauch-Schwarz inequality but this requires that $|A^{-1}A| \leq K|A|$ which is \emph{not} our hypothesis.  However, in \cite[Lemma 4.3]{tao::6} Tao saw that
\begin{equation*}
\|1_{A^{-1}} \ast 1_A \|_{\ell^2(G)}^2=\|1_{A} \ast 1_{A^{-1}} \|_{\ell^2(G)}^2
\end{equation*}
since $\langle f \ast g,h\rangle=\langle g, \tilde{f} \ast h\rangle = \langle f,h \ast \tilde{g}\rangle$ for all functions $f,g,h \in \ell^1(G)$. (\ref{eqn.sym}) then follows from Cauchy-Schwarz on the right hand quantity and the hypothesis $|AA^{-1}| \leq K|A|$.  

It may be worth noting that when $|AA^{-1}|<2|A|$ we have $AA^{-1}=A^{-1}A$, and so the above switch is not necessary.  Tao presented a proof of this fact in \cite{tao::8}, but it is more involved than the argument above so we have not recorded it here.

The paper \cite{crosis::} of Croot and Sisask is well worth reading and the proof of \cite[Theorem 4.1]{crosis::} (and hence Proposition \ref{prop.cs}) is not long, although it is rather clever.  One of the main points of their argument though is that they achieved good dependencies on $k$ and $K$, something we do not record as the second result we used is not blessed with such good dependencies.

We need a little notation: suppose that $G$ is a finite group and $X \subset G$.  Then we write $\P_X$ for the uniform probability measure supported on $X$.  Given a measure $\mu$ on $G$ we write $\tilde{\mu}$ for the measure assigning mass $\overline{\mu(\{x^{-1}\})}$ to each $x \in G$, and similarly for functions.  Finally, convolution of a function $f$ and a measure $\mu$ is defined point-wise by
\begin{equation*}
f \ast \mu(x) = \int{f(xz^{-1})d\mu(z)} \text{ for all } x \in G.
\end{equation*}
We can now state the result which is an easy corollary of \cite[Proposition 20.1]{san::9}.  (The result as stated in \cite{san::9} concerns functions in the Fourier-Eymard algebra but it is a short calculation (essentially in \cite[Lemma 6.1]{san::9}) to show that $f \ast \tilde{f}$ has algebra norm bounded by its $L^\infty$-norm.)
\begin{proposition}\label{prop.quantcon}
Suppose that $G$ is a finite group, $f \in \ell^2(G)$, $X$ is symmetric and $\P_G(X^4) \leq K\P_G(X)$ and $\nu \in (0,1]$ is a parameter. Then there are symmetric neighbourhoods of the identity $B' \subset B \subset X^4$ such that $\P_G(B')=\Omega_{K,\nu}(\P_G(X))$,
\begin{equation*}
\sup_{x \in G}{\|f \ast \tilde{f} \ast\widetilde{\P_B}\ast  \P_B-f \ast \tilde{f}\ast\widetilde{\P_B}\ast  \P_B(x)\|_{L^\infty(\P_{xB'})}}\leq \nu\|f \ast \tilde{f}\|_{L^\infty(G)}
\end{equation*}
and
\begin{equation*}
\sup_{x \in G}{\|f \ast \tilde{f} - f \ast \tilde{f}\ast\widetilde{\P_B}\ast  \P_B\|_{L^2(\P_{xB'})}}\leq \nu\|f \ast \tilde{f}\|_{L^\infty(G)}.
\end{equation*}
\end{proposition}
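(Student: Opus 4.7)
The plan is to invoke \cite[Proposition 20.1]{san::9} as a black box and then perform a minor calibration that replaces the Fourier--Eymard algebra norm appearing in Sanders' statement by the $L^\infty$-norm used here.  That reference proposition asserts exactly the pair of inequalities above (with the same nested structure $B' \subset B \subset X^4$ and the same density control $\P_G(B') = \Omega_{K,\nu}(\P_G(X))$), but for an arbitrary function $g \in A(G)$ and with $\|g\|_{A(G)}$ on the right-hand side in place of $\|f \ast \tilde{f}\|_{L^\infty(G)}$.  So the task reduces to showing that substituting $g = f \ast \tilde{f}$ incurs no loss, i.e.\ that the algebra norm of this particular $g$ is controlled by its sup-norm.

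For the calibration I would show that $f \ast \tilde{f}$ is positive-definite as a function on $G$.  Using associativity of convolution together with the identity $\langle u \ast v, w\rangle = \langle v, \tilde{u} \ast w\rangle$ already recorded in the excerpt, one has for any $h \in \ell^2(G)$
\begin{equation*}
\langle (f \ast \tilde{f}) \ast h, h\rangle = \langle \tilde{f} \ast h, \tilde{f} \ast h\rangle = \|\tilde{f} \ast h\|_{\ell^2(G)}^2 \geq 0.
\end{equation*}
Two standard features of positive-definite functions then finish the job: they attain their $L^\infty$-norm at the identity, and (essentially the content of \cite[Lemma 6.1]{san::9}) their Fourier--Eymard algebra norm equals their value at the identity.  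Combining these,
\begin{equation*}
\|f \ast \tilde{f}\|_{A(G)} = (f \ast \tilde{f})(e) = \|f\|_{\ell^2(G)}^2 = \|f \ast \tilde{f}\|_{L^\infty(G)}.
\end{equation*}
Feeding $g = f \ast \tilde{f}$ into \cite[Proposition 20.1]{san::9} and substituting this identity on the right-hand sides of the two estimates there gives both displayed inequalities.

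The main obstacle is purely bibliographical: one must check that the statement of \cite[Proposition 20.1]{san::9} really does line up with what is claimed here --- the same powers of $X$ in the ambient approximate group, the same combination of smoothing kernels $\widetilde{\P_B}\ast \P_B$ applied on the correct side, and the same $L^\infty$ and $L^2$ norms against the coset measures $\P_{xB'}$ uniformly over $x \in G$.  The genuine content behind the proposition, namely the iterative extraction of the nested pair $B' \subset B$ inside $X^4$ using the approximate-group hypothesis $\P_G(X^4) \leq K\P_G(X)$, happens entirely inside \cite{san::9} and is invoked here as a black box.
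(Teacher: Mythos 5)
Your proposal matches the paper's own treatment: the paper likewise invokes \cite[Proposition 20.1]{san::9} as a black box and merely remarks that a short calculation (essentially \cite[Lemma 6.1]{san::9}) shows that $f \ast \tilde{f}$ has algebra norm bounded by its $L^\infty$-norm. Your positive-definiteness computation is a correct way of supplying exactly that calibration, so the argument is essentially the same as the paper's.
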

It is perhaps worth saying that very roughly this proposition makes quantitative the idea that if $f \in L^2(G)$ then $f \ast \tilde{f}$ is continuous.  This in itself is a little involved as the appropriate quantitative notion of continuity is (necessarily) not in $L^\infty$ but rather in a local $L^2$-norm.  The paper \cite{grekon::} was the first place to develop this idea in the Abelian context, and the above result is a non-Abelian extension localised to approximate groups.

\begin{proof}[Proof of Theorem \ref{thm.ham}]
We apply Proposition \ref{prop.cs} to the set $A$ with $k=8$ to get a symmetric neighbourhood of the identity, $X$, with $|X|=\Omega(|A|)$ such that
\begin{equation*}
1_{A^{-1}} \ast 1_{A} \ast 1_{A^{-1}} \ast 1_A(x)\geq |A|^3/4 \text{ for all }x \in X^k.
\end{equation*}
First this tells us that
\begin{equation*}
|X^4||A|^3/4 \leq \sum_{x \in G}{1_{A^{-1}} \ast 1_{A} \ast 1_{A^{-1}} \ast 1_A(x)} = |A|^4
\end{equation*}
which combined with the lower bound on $|X|$ gives $|X^4| = O(|X|)$.

Now apply Proposition \ref{prop.quantcon} to $f=1_{A^{-1}}$ with this set $X$ and parameter $\nu=\varepsilon/10$.  This tells us (on combining the two conclusions of the proposition using the triangle inequality) that
\begin{equation*}
\int{|1_{A^{-1}} \ast 1_{A}(y) - 1_{A^{-1}} \ast 1_{A}\ast\widetilde{\P_B}\ast  \P_B(x)|^2d\P_{xB'}(y)} \leq 4\nu^2|A|^2 \textrm{ for all } x \in G,
\end{equation*}
since $\|1_{A^{-1}} \ast 1_{A}\|_{L^\infty(G)} = |A|$.  Now, suppose for a contradiction that there is some $x \in G$ such that
\begin{equation}\label{eqn.intermediate}
\varepsilon |A|/4 < 1_{A^{-1}} \ast 1_{A} \ast \widetilde{\P_B}\ast  \P_B(x)< 3\varepsilon |A|/4,
\end{equation}
in which case
\begin{equation*}
\int{|1_{A^{-1}} \ast 1_{A}(y) - 1_{A^{-1}} \ast 1_{A}\ast\widetilde{\P_B}\ast  \P_B(x)|^2d\P_{xB'}(y)} >( \varepsilon/4)^2|A|^2
\end{equation*}
in light of (\ref{eqn.key}).  This contradicts our choice of $\nu$ and hence there are no $x \in G$ such that (\ref{eqn.intermediate}) holds.

On the other hand in light of the first conclusion in Proposition \ref{prop.quantcon}, for all $x \in G$ we have
\begin{equation*}
|1_{A^{-1}} \ast 1_{A}\ast\widetilde{\P_B}\ast  \P_B(xy)-1_{A^{-1}} \ast 1_{A}\ast\widetilde{\P_B}\ast  \P_B(x)|\leq \varepsilon|A|/10 \textrm{ for all } y \in B'.
\end{equation*}
Thus by the triangle inequality and the fact that (\ref{eqn.intermediate}) does not hold we conclude that
\begin{equation*}
S:=\{x \in G:  1_{A^{-1}} \ast 1_{A}\ast\widetilde{\P_B}\ast  \P_B(x)> 3\varepsilon|A|/4\}
\end{equation*}
is invariant under right multiplication by elements of $B'$, and hence by the group $H$ generated by $B'$.  Now suppose, for a contradiction, that $S$ is empty whence
\begin{eqnarray*}
\varepsilon|A|^3/4 & \geq & \langle 1_{A^{-1}} \ast 1_{A}\ast\widetilde{\P_B}\ast  \P_B, 1_{A^{-1}}\ast 1_A \rangle\\ & = & \langle 1_{A^{-1}} \ast 1_A \ast 1_{A^{-1}} \ast 1_{A},\widetilde{\P_B}\ast  \P_B \rangle.
\end{eqnarray*}
Of course, since $B \subset X^4$ we have that $\supp \widetilde{\P_B}\ast  \P_B \subset X^{-4}X^4=X^8$ and so
\begin{equation*}
\langle 1_{A^{-1}} \ast 1_A \ast 1_{A^{-1}} \ast 1_{A},\widetilde{\P_B}\ast  \P_B \rangle \geq |A|^3/4.
\end{equation*}
This leads to a contradiction if $\varepsilon$ is sufficiently small (which we may certainly assume) and so we conclude that $S$ is non-empty.  

Since $S$ is non-empty (and $H$ right invariant) we note that there is some $z \in G$ such that
\begin{eqnarray*}
3\varepsilon|A|/4& \leq & \langle \P_{zH},1_{A^{-1}} \ast 1_{A}\ast\widetilde{\P_B}\ast  \P_B\rangle\\ & = & \langle 1_A \ast \P_{zH}, 1_A \ast \widetilde{\P_B} \ast \P_B\rangle\\ & \leq & \|1_A \ast \P_{zH}\|_{\ell^\infty(G)}\|1_{A}\ast\widetilde{\P_B}\ast  \P_B\|_{\ell^1(G)} = \|1_A \ast \P_{zH}\|_{\ell^\infty(G)}|A|;
\end{eqnarray*}
we conclude that there is some $x \in G$ for which $|A \cap xH| \geq 3\varepsilon |H|/4$.  Given this we first note that $|H|=O_\varepsilon(|A|)$; secondly, since $|H| \geq |B'| = \Omega_{\varepsilon}(|A|)$, we have
\begin{equation*}
|x^{-1}A \cap H| =\Omega_\varepsilon(|A|).
\end{equation*}
Finally we decompose $G$ into left cosets of $H$, and suppose that there are $R$ cosets $yH$ with $|yH \cap A|>0$.  Then
\begin{equation*}
O(|A|)=|AA^{-1}|= |AA^{-1}x| \geq |A \cap (x^{-1}A \cap H)^{-1}| \geq R.\Omega_{\varepsilon}(|A|).
\end{equation*}
It follows that $R=O_{\varepsilon}(1)$ and hence $A$ is contained in $O_\varepsilon(|A|)$ left cosets of $H$ as required.
\end{proof}
The bounds in Proposition \ref{prop.quantcon} are very poor, but even in the Abelian setting they are at best exponential in $\nu^{-2}$.  This can be seen by examining the Niveau sets of Ruzsa \cite{ruz::6} (see also \cite{grekon::} and \cite{wol::0}).  This dependence means we necessarily get at best an exponential bound in $\varepsilon^{-2}$ in Theorem \ref{thm.ham}, whereas Hamidoune's work is far better giving a linear bound.

To summarise what we have seen: our method is a more complicated way of getting a weaker result which cannot ever yield a result as strong as Hamidoune's.

\section*{Acknowledgements}

The author should very much like to thank Terry Tao for conversations leading to these notes and for providing the preprint on Hamidoune's work; Emmanuel Breuillard and Ben Green for useful conversations; and the organisers of the conference `Combinatoire Additive {\'a} Paris 2012' in honour of Hamidoune which inspired the completion of these notes.

\bibliographystyle{halpha}

\bibliography{references}

\end{document}